\documentclass[amssymb,amsfonts,refcheck,12pt,verbatim,righttag]{amsart}

\usepackage{color}
\usepackage{amssymb}
\usepackage{graphics}
\usepackage{tikz}
\usepackage{multicol}
\usepackage{subfigure}
\usepackage{bm}
\usepackage{algorithm}
\usepackage{algpseudocode}

 \numberwithin{equation}{section} 
\newcommand{\beq}{\begin{equation}}
\newcommand{\eeq}{\end{equation}}

\newcommand{\ben}{\begin{eqnarray}}
\newcommand{\een}{\end{eqnarray}}

\newcommand{\bet}{\begin{eqnarray*}}
\newcommand{\eet}{\end{eqnarray*}}

\DeclareMathOperator{\ord}{ord}
\newcommand{\rad}{\operatorname{rad}}

\newtheorem{thm}{Theorem}[section]
\newtheorem{lem}[thm]{Lemma}

\newtheorem{de}[thm]{Definition}
\newtheorem{rem}[thm]{Remark}

\newtheorem{ques}[thm]{Question}

\usepackage[colorlinks,citecolor=red,hypertexnames=false]{hyperref}

\newcommand{\R}{\mathbb{R}}

\newcommand{\N}{\mathbb{N}}

\DeclareMathOperator{\lcm}{lcm}

\theoremstyle{plain}

\begin{document}
\baselineskip 16pt

\title{On factorial reciprocals in Cantor sets}

\author{Kehao Lin}
\address[]{School of Mathematics and Statistics\\Central South University\\ Changsha, 410083, PR China}
\email{1225357759@qq.com}

\author{Yufeng Wu*}
\address[]{School of Mathematics and Statistics\\HNP-LAMA\\Central South University\\ Changsha, 410083, PR China}
\email{yufengwu.wu@gmail.com}
\thanks{$^*$Corresponding author}

\author{Siyu Yang}
\address[]{School of Mathematics and Statistics\\Central South University\\ Changsha, 410083, PR China}
\email{1040863693@qq.com}

\keywords{Cantor sets,  missing-digit sets, rational numbers in fractals}
\thanks{2020 {\it Mathematics Subject Classification}: 28A80}

\begin{abstract}
Let $C$ be the middle-third Cantor set. We show that 
\[\left\{\frac{1}{n!}: n\in\N\right\}\cap C=\left\{1, \frac{1}{5!}\right\}.\]
This answers a question recently posed by Jiang \textit{et al.} [J. Lond. Math. Soc., 113(1): e70408, 2026]. Our approach extends to general missing-digit sets, showing that, in any such set, there are only finitely many elements of the form $\frac{1}{n!}$, all of which can be effectively determined. 
\end{abstract}

\maketitle

\section{Introduction}

Given a sequence $S=\{x_n\}_{n=1}^{\infty}$ and a Cantor set $K$ in $\R$, it is generally difficult to describe the intersection $S\cap K$.  This problem is of independent interest and is also related to the famous (and still open) Erd\H{o}s similarity problem. For instance, it is still unknown whether every measurable set in $\R$ with positive Lebesgue measure contains an affine copy of $\left\{\frac{1}{2^n}:n\in\N\right\}$ or  $\left\{\frac{1}{n!}: n\in\N\right\}$. For more details on the Erd\H{o}s similarity problem, we refer the reader to the recent survey \cite{JLM25F}.

Recently, Jiang, Kong, Li, and Wang \cite{JKLW25O} studied the intersections of a class of planar Cantor sets with the unit circle, as well as the intersections of missing-digit sets on $\R$ with certain sequences. Among other results,    they characterized  $ \left\{\frac{1}{n^2}: n\in\N\right\}\cap K$ for  missing-digit sets $K$ satisfying certain assumptions. They also posed the following question.

\begin{ques}\label{Qest}\cite[Question 5.3]{JKLW25O} Let $C$ be the middle-third Cantor set. 
	Determine the intersection
	\[
	\left\{ \frac{1}{n!} : n \in \mathbb{N} \right\} \cap C.
	\]
	Note that $1$ and $\frac{1}{5!}$ are in this intersection.
\end{ques}

We answer Question~\ref{Qest} by proving the following. 

\begin{thm}\label{thmCn!}
	We have 
	\[\left\{\frac{1}{n!}: n\in\N\right\}\cap C=\left\{1, \frac{1}{5!}\right\}.\]
\end{thm}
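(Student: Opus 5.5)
The plan is to reduce membership of $1/n!$ in $C$ to a statement about the orbit of $3$ in a unit group, and then use the large power of $2$ dividing $n!$ to produce an orbit point in the removed middle third. Throughout I use the convention that $x\in C$ if \emph{some} ternary expansion of $x$ uses only the digits $0,2$. The key fact is that $x\in C\cap[0,1)$ implies $\{3x\}\in C$. Indeed, write $x=\sum_{i\ge1} a_i3^{-i}$ with $a_i\in\{0,2\}$. Then $3x=a_1+\sum_{i\ge1}a_{i+1}3^{-i}$. The tail is at most $1$, with equality only if $a_i=2$ for all $i\ge2$, which would force $x\in\{1/3,1\}$, excluded by $x<1$ and $x\in C$. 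So the tail lies in $[0,1)$, equals $\{3x\}$, and has digits in $\{0,2\}$. Hence for $x\in C\cap[0,1)$ the whole orbit $\{3^jx\}$, $j\ge 0$, lies in $C$, and in particular avoids $(1/3,2/3)$.

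\textbf{Reduction.} Fix $n\ge 2$. Write $n!=3^a m$ with $3\nmid m$, so $m\ge 2$. Since $3^a\cdot\frac{1}{n!}=\frac1m\in[0,1)$, the fact above shows that if $1/n!\in C$, then every number $\{3^j/m\}$ lies in $C$. Equivalently, no element of the cyclic subgroup $\langle 3\rangle\subset(\mathbb Z/m\mathbb Z)^*$, represented in $[0,m)$, lies in the open interval $(m/3,2m/3)$. The goal is to show that such an element does exist for all large $n$.

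\textbf{Constructing a middle-third residue.} Write $m=2^s m'$ with $m'$ odd and $s=v_2(n!)\ge n-\log_2 n-1$. Set $t=\operatorname{ord}_{m'}(3)$. Then $t$ divides $\lcm\{\varphi(p^e): p^e\,\|\,m',\ p\ge5\}$, and each such $p^e$ has $p\le n$. Because $p$ is odd, $v_2(\varphi(p^e))=v_2(p-1)$, so $v:=v_2(t)\le \max_{5\le p\le n}v_2(p-1)\le\log_2 n$.

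The powers $3^{ti}$ are $\equiv 1\pmod{m'}$. Modulo $2^s$ they generate $\langle 3^{2^v}\rangle$, which (for $s\ge v+3$) is exactly the subgroup $\{x\equiv1 \pmod{2^{v+2}}\}$, by the standard structure $3^{2^v}\equiv 1+2^{v+2}u\pmod{2^{v+3}}$ with $u$ odd.

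By CRT, $\langle3\rangle$ mod $m$ therefore contains every residue $x\equiv1\pmod{L}$ with $L=m'2^{v+2}$. These form an arithmetic progression in $[0,m)$ with step $L=m/2^{s-v-2}$. As soon as $s-v-2\ge2$, we have $L\le m/4<m/3$, and the progression must hit $(m/3,2m/3)$. This contradicts the reduction. So $1/n!\notin C$ whenever $v_2(n!)\ge v_2(t)+4$. By the bounds above this holds once $n-\log_2 n-1\ge \log_2 n+4$, i.e.\ for all $n\ge 12$ or so. For small $n$ one can also check directly, since $v$ is tiny there.

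\textbf{Finite check and main obstacle.} For the remaining $n\le 11$, I would compute the purely periodic ternary expansion of $1/m$ directly. For example, $n=2,3$ give $m=2$ and $1/2=0.\overline{1}_3$, which is not in $C$. The value $n=5$ gives $m=40$, and $1/40=0.\overline{0002}_3\in C$, which confirms the two listed elements (the case $n=1$ being $1=0.\overline{2}_3$). The main obstacle is making the $2$-adic step airtight: the claim that $\langle 3^{t}\rangle$ mod $2^s$ is the full congruence class $1+2^{v+2}\mathbb Z$, together with an explicit bound on $v_2(t)$ uniform in $n$. If the bound via $\max v_2(p-1)$ turns out too crude for the small range, I would lower the threshold by hand computation, checking more values of $n$ explicitly.
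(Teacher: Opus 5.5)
Your proof is correct, and it takes a genuinely different route from the paper's.

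The paper never looks at the orbit of $1/M_n$ directly. It invokes Korobov's digit-frequency estimate in the form of Lemma~\ref{Cor}, which forces $\ord_{M_n}(3)\le 6\,\ord_{\rad(M_n)}(3)$. Lemma~\ref{lemordMn} then shows that $8$ divides the quotient $\ord_{M_n}(3)/\ord_{\rad(M_n)}(3)$ once $n\ge 21$, which leaves $n\le 20$ for a direct check.

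You replace Korobov's theorem with an exact description of part of the orbit. Modulo your $m=2^sm'$ (the paper's $M_n$), the subgroup $\langle 3^t\rangle$ is precisely the residue class $1+m'2^{v+2}\mathbb{Z}$. A complete arithmetic progression in $[0,m)$ with step at most $m/4$ cannot skip the interval $(m/3,2m/3)$.

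Both proofs run on the same mechanism: the $2$-part of $\ord_{2^s}(3)$ grows linearly in $n$, while $\nu_2(p-1)\le\log_2 n$ for the odd primes $p\le n$. Your version buys two things:
\begin{itemize}
\item It is self-contained, with no appeal to Korobov's theorem.
\item It is quantitatively sharper. It only needs $\langle 3^t\rangle$ to have at least $4$ elements, rather than a ratio of orders exceeding $6$. With the exact values of $s$ and $v$ ($s=7$ for $n=8,9$; $s=8$ for $n=10,11$; $s=10$ for $n=12$; always $v=2$), it already excludes every $n\ge 8$. So only $n\le 7$ needs checking by hand, and there only $n=1,5$ survive.
\end{itemize}
The paper's route buys uniformity: a single black-box inequality that applies verbatim to every $K_{m,D}$. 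Your argument also adapts to that setting, using the interval $[i/m,(i+1)/m)$ of a missing digit $i$ together with Lemma~\ref{LemOrder}(iv) for an odd prime $p_0\nmid m$.

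Three small repairs are needed.
\begin{itemize}
\item In the invariance step, $x=1/3$ is not excluded by $x<1$ and $x\in C$; there the tail equals $1$. The claim still stands, since $\{3x\}=0\in C$.
\item The congruence $3^{2^v}\equiv 1+2^{v+2}u\pmod{2^{v+3}}$ with $u$ odd fails for $v=0$, so you need $v\ge 1$. This holds: $5\mid m'$ and $\ord_5(3)=4$ give $v\ge 2$ for all $n\ge 5$.
\item The crude inequality $n-2\log_2 n\ge 5$ holds only from $n=13$ on. So $n=12$, which is not in your list $n\le 11$, must be covered by its exact values $s=10$, $v=2$ rather than by the bound.
\end{itemize}
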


Our approach naturally generalizes to  all missing-digit sets. 

\begin{de}\label{DefMissing}
	Let $m\in \N_{\geq 3}$ and $D\subset \{0,1,\ldots, m-1\}$ with cardinality  $1<\#D<m$. The missing-digit set with base $m$ and digit set $D$, denoted by $K_{m,D}$, is defined as 
	\[K_{m,D}=\left\{\sum_{i=1}^{\infty}\frac{a_i}{m^i}: a_i\in D, i\geq 1\right\}.\] 
\end{de}

Clearly, the middle-third Cantor set $C$ corresponds to $K_{3, \{0,2\}}$. Theorem~\ref{thmCn!} can be extended to the following more general result. 

\begin{thm}\label{thmmissing}
	Let $m\in \N_{\geq 3}$ and $D\subset \{0,1,\ldots,m-1\}$ with $1<\#D<m$. Then the intersection 
\begin{equation}\label{eq1n1KmdF}
\left\{\frac{1}{n!}: n\in\N\right\}\cap K_{m,D}
\end{equation}
	is  finite and can be effectively determined.
\end{thm}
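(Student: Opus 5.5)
The plan is to pass from digits to the dynamics of $T(x)=mx \bmod 1$. If $x=\sum a_i m^{-i}\in K_{m,D}$ with all $a_i\in D$, then $T^j x=\sum_i a_{i+j}m^{-i}\in K_{m,D}$ for every $j\ge 0$. So $1/n!\in K_{m,D}$ forces the whole orbit $\{m^j/n! \bmod 1: j\ge 0\}$ to lie in $K_{m,D}$.

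Since $\#D<m$, some digit $d\notin D$. Then $K_{m,D}$ misses the open interval $I_d=(d/m,(d+1)/m)$, because points of $K_{m,D}$ in $[d/m,(d+1)/m]$ can only be endpoints. More generally it misses every open interval $(w/m^k,(w+1)/m^k)$ whose base-$m$ word $w$ contains the digit $d$. So it suffices to show that, for all $n$ beyond an explicit bound $N(m,D)$, some orbit point $m^j/n! \bmod 1$ lands in such a gap. The remaining finitely many $n\le N(m,D)$ are then checked directly. Each $1/n!$ is rational, with eventually periodic base-$m$ expansion computable from $n!$, so membership is decidable. This gives both finiteness and effectivity, and in the case $m=3$, $D=\{0,2\}$ it reduces Theorem~\ref{thmCn!} to a finite computation.

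To find a point in a gap, write $n!=s\,t$ where $s$ collects the primes dividing $m$ and $\gcd(t,m)=1$. For $j$ large enough that $s\mid m^j$, we get $m^j/n!\equiv u\,m^{j'}/t \pmod 1$ with $\gcd(u,t)=1$. The eventual orbit is therefore the set $\{r/t: r\in u\langle m\rangle\subset(\mathbb Z/t\mathbb Z)^{*}\}$, a coset of the cyclic subgroup generated by $m$. The key arithmetic input is that $t$ is divisible by every prime $p\le n$ with $p\nmid m$, in particular by primes $p\in(n/2,n]$, which divide $n!$ exactly once. Reducing modulo such a $p$ (i.e.\ multiplying the numerator by $t/p$), the orbit projects onto the coset $u'\langle m\rangle \subset(\mathbb Z/p)^*$. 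Since $\operatorname{ord}_p(m)\ge \log p/\log m$, this coset is large.

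I would then exploit the several such primes $p_1,\dots,p_k\in(n/2,n]$ simultaneously. The orbit points $r/t$ are controlled by CRT, so the orbit of $1/n!$ contains the full product structure of the cosets modulo $p_1\cdots p_k$. The goal is to exhibit an element $r$ of the coset with $r/t$ in a gap $I_d$ of length $1/m$. Concretely, I would choose $j$ so that the leading base-$m$ digit of $m^j/n! \bmod 1$ is forced to be $d$, using that the digits of $1/n!$ in the window just after position $\log_m s$ are governed by $u/t$. Changing $j$ within one period $\operatorname{ord}_t(m)$ cycles through all $\operatorname{lcm}_i \operatorname{ord}_{p_i}(m)$ patterns. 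A pigeonhole or counting argument should then show that not all of these can avoid the digit $d$ once that lcm exceeds something like $\#D^{L}$-many admissible blocks.

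The main obstacle is exactly this last step: turning "the multiplicative orbit has large order" into "the orbit actually visits the gap $I_d$", without invoking ineffective equidistribution. My first attempt would be a counting argument. Rationals with denominator $t$ and period $L=\operatorname{ord}_t(m)$ lying in $K_{m,D}$ number at most $\#D^{L}$, while the orbit has exactly $L$ points. So I would instead count the admissible values of the repeating block $u(m^L-1)/t$ and show that the divisibility of $t$ by all primes in $(n/2,n]$ is incompatible with the block avoiding $d$ in every position once $n$ is large. If that fails, the fallback is an explicit exponential-sum bound for cosets of $\langle m\rangle$ modulo $t$, which is effective and still yields an explicit $N(m,D)$.
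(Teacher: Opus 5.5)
There is a genuine gap, and it sits exactly at the step you flag as the main obstacle. Your reductions are sound: $\times m$-invariance, the gap $(d/m,(d+1)/m)$, passing to the $m$-coprime part $t$ of $n!$, and the final finite check. But nothing in the proposal actually forces an orbit point $m^j/n! \bmod 1$ into a gap.

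\begin{itemize}
\item The counting idea yields no contradiction, since $\#D^{L}\geq 2^{L}$ is far larger than the $L$ orbit points.
\item The reduction modulo primes $p\in(n/2,n]$ carries no positional information. The factor $t/p$ is not a power of $m$, so multiplying by it does not preserve $K_{m,D}$. The residue of $r$ modulo $p$ also says nothing about where $r/t$ lies in $[0,1)$. (Also, the orbit modulo $p_1\cdots p_k$ is cyclic of order $\lcm_i\ord_{p_i}(m)$, not the full product of cosets.)
\item More fundamentally, primes dividing $n!$ exactly once only see the squarefree part of the denominator. There no effective digit-distribution or exponential-sum bound is known in general, since $\ord_p(m)$ can be as small as about $\log_m p$. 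For example, if $p=1+m+\cdots+m^{k-1}$ is prime, then $1/p=(m-1)/(m^k-1)=0.\overline{0\cdots0(m-1)}$ in base $m$. So prime moduli alone cannot force the missing digit.
\item Likewise, for squarefree $q$ one has $\ord_{\rad(q)}(m)=\ord_q(m)$, so even Korobov's estimate is vacuous there.
\end{itemize}

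The missing idea is to exploit the high prime powers in $n!$. Fix an odd prime $p_0\nmid m$ and let $e=\nu_{p_0}(n!)\geq n/p_0-1$. Write $t=p_0^{e}t'$ and set $M=\lcm(\ord_{p_0}(m),\ord_{t'}(m))$. Then $m^{M}\equiv 1\pmod{t'}$ and $m^{M}=1+p_0^{s}u$ with $p_0\nmid u$. Hence the powers $m^{Mk}$ run through all residues $x\equiv 1\pmod{t'p_0^{s}}$ modulo $t$. So $\{r m^{Mk}/t \bmod 1\}$ is the full progression $r/t+p_0^{-(e-s)}\mathbb{Z}$ modulo $1$, which meets $(d/m,(d+1)/m)$ as soon as $p_0^{e-s}>m$. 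Lifting the exponent, together with $\ord_{p^{e_p}}(m)\mid p^{e_p-1}(p-1)$ for $p\neq p_0$, gives $s\leq \tau+\log_{p_0}(n-1)$, where $\tau=\nu_{p_0}(m^{\ord_{p_0}(m)}-1)$. Thus $e-s\to\infty$ effectively.

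The paper runs the same fixed-prime valuation argument through Korobov's bound. Lemma~\ref{Cor} says a missing digit forces $\ord_q(m)\leq 2m\,\ord_{\rad(q)}(m)$. Combined with Lemma~\ref{LemOrder}(iv), this shows $\nu_{p_0}\bigl(\ord_{Q_n}(m)/\ord_{\rad(Q_n)}(m)\bigr)>\log_{p_0}(2m)$ for explicit large $n$. Your exponential-sum fallback could only succeed if aimed at this non-squarefree part of $t$. As written, the central step of your proof is missing.
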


\begin{rem}\label{reman1}
According to our method, the conclusion of Theorem~\ref{thmmissing} still holds if we replace $\left\{\frac{1}{n!}: n\in\N\right\}$ by $\left\{\frac{a}{n!}: a,n\in\N, \gcd(a,n!)=1\right\}$. 
\end{rem}

After the submission of the present manuscript, Kominers
\cite{Kominers2026}, building on the fixed-prime valuation argument
used in our proof of Theorem~\ref{thmmissing}, abstracted the key denominator obstruction and
established a general criterion for reciprocal sequences in
missing-digit sets. His criterion recovers our factorial result as a
special case and applies, among others, to superfactorials, products
of polynomial values, products of Fibonacci numbers, and products of
$m^k-1$.

In the next section, we first prove~Theorem~\ref{thmmissing}. Then we specialize the argument to obtain Theorem~\ref{thmCn!}. We also provide an algorithm to determine the intersection in Theorem~\ref{thmmissing}. 

\section{Proofs of the main results}

We first introduce some notation. Throughout, $\N$ denotes the set of positive integers $\{1,2,\ldots\}$.  
For $a,b\in \N$ with $a>1$, the {\em $a$-adic valuation} of $b$, denoted by $\nu_a(b)$, is the largest non-negative
integer $s$ such that $a^s\mid b$. Given $a,q\in \N$ with $\gcd(a,q)=1$, we define the {\em multiplicative order of $a$ modulo $q$}, denoted by $\ord_q(a)$, to be the smallest positive integer $t$ such that 
\[a^t\equiv 1\pmod{q}.\] 
For an integer $q\geq 2$, let $\rad(q)$ denote the {\em radical} of $q$, that is, the product of all prime divisors (without multiplicity) of $q$:
\begin{equation}\label{eqradq}
    \rad(q)=\prod_{\substack{p\text{ prime}\\ p\mid q}}p.
\end{equation}
Throughout, to ease notation, we reserve the letter $p$ for prime numbers. Thus, \eqref{eqradq} simplifies to $\rad(q)=\prod_{p\mid q}p$.

Given a reduced fraction $\frac{r}{q}\in (0,1)$ and $m\in\N_{\geq 2}$ with $\gcd(q,m)=1$, it is well known that the $m$-ary expansion of $\frac{r}{q}$ is purely periodic with period length $\ord_q(m)$; see, for example, \cite[Proposition 4.10]{Schleischitz21}. For $i\in \{0,1,\ldots,m-1\}$, let $N_i(r/q)$ denote the number of occurrences of $i$ in a full period of the $m$-ary expansion of $\frac{r}{q}$. Our proofs are based on the following result of Korobov \cite{Korobov70}, which gives an estimate on $N_i(r/q)$.

\begin{lem}\label{LemKorobov}\cite[Theorem 4]{Korobov70}
Let $m\geq 2$ and let $r<q$ be positive integers with $\gcd(mr,q)=1$. Then, for every $i\in\{0,1,\ldots,m-1\}$,
\begin{equation}\label{eqNirq}
\left|N_i\left(\frac rq\right)-\frac{\ord_q(m)}{m}\right|
\leq 2\ord_{\rad(q)}(m).
\end{equation}
\end{lem}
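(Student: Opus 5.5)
The plan is to interpret $N_i(r/q)$ as the number of points of a multiplicative orbit modulo $q$ that fall into an interval. We then show that this orbit splits into a small number of arithmetic progressions, and each progression contributes an error of less than $1$.

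\textbf{Step 1: reduction to counting orbit points.} Write $T=\ord_q(m)$. For $k\ge 1$ the $k$-th $m$-ary digit of $r/q$ is $\lfloor m\{m^{k-1}r/q\}\rfloor$. Hence the digit $i$ occurs at position $k$ exactly when the least residue $x_k\equiv rm^{k-1}\pmod q$ satisfies $x_k\in[iq/m,(i+1)q/m)$. Therefore $N_i(r/q)$ counts the elements of the set $rH$ that lie in an interval $I_i$ of length $q/m$, where $H=\langle m\rangle\subset(\mathbb Z/q\mathbb Z)^*$ and $\# H=T$. This step uses $\gcd(mr,q)=1$.

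\textbf{Step 2: structure of a subgroup of $H$.} Let $t=\ord_{\rad(q)}(m)$ and write $m^t=1+u$, so every prime $p\mid q$ divides $u$. If $q$ is even and $\nu_2(u)=1$, replace $t$ by $s=2t$ and $u$ by $u'=(1+u)^2-1=u(2+u)$. Then $8\mid u'$. Moreover, $\nu_p(u')=\nu_p(u)$ for odd $p\mid q$, because $2+u\equiv 2\pmod p$. In all other cases put $s=t$. Let $d=\prod_{p\mid q}p^{\min(\nu_p(u'),\nu_p(q))}$.

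The claim is that the subgroup $\langle 1+u'\rangle$ of $(\mathbb Z/q\mathbb Z)^*$ equals $\{x: x\equiv 1\pmod d\}$. The proof goes through the Chinese remainder theorem. For $p^k\| q$ and $e=\nu_p(u')\ge1$ (with $e\ge 2$ if $p=2$), the standard lifting-the-exponent computation gives $\ord_{p^k}(1+u')=p^{\max(k-e,0)}$. This is exactly the size of the group $1+p^{e}\mathbb Z$ modulo $p^k$, and $\langle 1+u'\rangle$ is contained in that group, so the two coincide. The local groups have pairwise coprime prime-power orders, so their product is cyclic, and $1+u'$ generates it because it generates each component.

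Consequently $H$ is the union of the $s$ cosets $m^j\langle 1+u'\rangle$ for $0\le j<s$. These cosets are possibly non-distinct; we handle this by counting each distinct coset once, which only decreases the number of cosets. Each coset of $rH$ then has the form $\{c+dy\bmod q: 0\le y<q/d\}$ with $c=rm^j$. Indeed, multiplying the set $d\mathbb Z/q\mathbb Z$ by the unit $c$ permutes it, so this coset is a full residue class modulo $d$ inside $\mathbb Z/q\mathbb Z$.

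\textbf{Step 3: counting and conclusion.} A full residue class modulo $d$ in $\{0,\dots,q-1\}$ meets a half-open interval of length $q/m$ in either $\lfloor q/(dm)\rfloor$ or $\lceil q/(dm)\rceil$ points. So its count differs from $(q/d)/m$ by less than $1$. Let $s'\le s$ be the number of distinct cosets, so $T=s'q/d$. Summing over the cosets gives
\[
\left|N_i(r/q)-\frac{T}{m}\right|<s'\le s\le 2t,
\]
which is \eqref{eqNirq}. Note that the factor $2$ arises only from the dyadic correction in Step 2.

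The main obstacle is Step 2, in particular the exact determination of $\langle 1+u'\rangle$ as a congruence subgroup modulo $d$ and the careful handling of the prime $2$. The rest is elementary counting.
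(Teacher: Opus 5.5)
Your proof is correct. It cannot be compared step by step with the paper, because the paper does not prove this lemma: it quotes it from Korobov's Theorem~4, whose original argument is based on trigonometric sums. What you give instead is a self-contained elementary proof.

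You choose $s\in\{t,2t\}$ so that $m^s=1+u'$, where $p\mid u'$ for every $p\mid q$ and $4\mid u'$ when $q$ is even. Lifting the exponent then identifies $\langle m^s\rangle$ with the congruence subgroup $\{x\equiv 1\pmod d\}$. So the orbit $rH$ splits into $\gcd(s,\ord_q(m))$ full residue classes modulo $d$. Each class is an arithmetic progression of difference $d$ in $[0,q)$, and meets an interval of length $q/m$ in a number of points differing from $q/(dm)$ by less than $1$.

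Each step checks out. Two facts you use tacitly do hold: $u\ge 1$ because $m\ge 2$, so $d$ is well defined; and $d$ is divisible by every prime factor of $q$, so $1+d\mathbb{Z}$ really consists of units modulo $q$.

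Beyond making the paper self-contained, your route gives a slightly sharper and more transparent bound:
\[
\left|N_i\left(\frac rq\right)-\frac{\ord_q(m)}{m}\right|<\gcd\bigl(s,\ord_q(m)\bigr).
\]
Here $s=\ord_{\rad(q)}(m)$ unless $q$ is even and $m^{\ord_{\rad(q)}(m)}\equiv 3\pmod 4$. So the factor $2$ is purely a dyadic artifact. For example, in the paper's application to $C$ one has $4\mid\ord_{\rad(M_n)}(3)$ for $n\ge 5$. Hence no doubling occurs, and you get $\ord_{M_n}(3)<3\ord_{\rad(M_n)}(3)$ in place of the factor $6$.

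Two cosmetic points. When $s=t$, state explicitly that $u'=u$. Also, the equality $\nu_p(u')=\nu_p(u)$ for odd $p$ is never used; all that matters is $p\mid u'$ for all $p\mid q$, and $4\mid u'$ when $2\mid q$.
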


Our application of Korobov's result was inspired by the work of Shparlinski \cite{Shparlinski21} on rational numbers in missing-digit sets. More precisely, in a recent
study on Diophantine approximation on fractal sets, Schleischitz \cite{Schleischitz21} proved that if $\gcd(q,m)=1$, then the missing-digit set $K_{m,D}$ contains at most finitely many rational numbers of the form $\frac{a}{q^k}$, where $a,k\in\N$. By applying Korobov's result (i.e.,
Lemma~\ref{LemKorobov}), Shparlinski \cite{Shparlinski21} managed to prove a quantitative version of Schleischitz's theorem. Our resolution of Question~\ref{Qest} was inspired by Shparlinski's paper. For related  finiteness results, we refer the reader to \cite{LLW23R,JKLW24R,KWL26R}.

Our starting point is the following direct consequence of  Lemma~\ref{LemKorobov}.

\begin{lem}\label{Cor}
Let $m\in \N_{\geq 3}$ and $D\subset\{0,1,\ldots,m-1\}$ with $1<\#D<m$. Let $r<q$ be positive integers with $\gcd(mr,q)=1$. If $\frac{r}{q}\in K_{m,D}$, then
\[
\ord_q(m)\leq 2m\cdot\ord_{\rad(q)}(m).
\]
\end{lem}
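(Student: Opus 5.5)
Since $D$ is a proper subset of $\{0,1,\ldots,m-1\}$, there is a digit $i\in\{0,1,\ldots,m-1\}\setminus D$. The plan is to show that this digit never appears in the expansion of $\frac{r}{q}$, and then to read off the bound from Lemma~\ref{LemKorobov} applied to $i$.

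First we show that $\frac{r}{q}\in K_{m,D}$ forces every digit of the $m$-ary expansion of $\frac rq$ to lie in $D$. A point of $K_{m,D}$ is $\sum a_j m^{-j}$ with $a_j\in D$, and it could also have a second $m$-ary expansion. Such ambiguity happens only for numbers of the form $\frac{a}{m^k}$. Here $\gcd(q,m)=1$ and $0<\frac rq<1$ is reduced with $q>1$ (because $r<q$ and $\gcd(r,q)=1$). So $\frac rq$ is not an $m$-adic rational, and its $m$-ary expansion is unique. Hence this expansion is $(a_j)$ with all $a_j\in D$.

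The expansion is purely periodic with period $\ord_q(m)$, as recalled before Lemma~\ref{LemKorobov}. Since $i\notin D$, the digit $i$ does not occur in a full period, so $N_i(r/q)=0$. The hypotheses of Lemma~\ref{LemKorobov} are exactly $\gcd(mr,q)=1$ and $r<q$, so inequality \eqref{eqNirq} applies to this $i$ and gives
\[
\frac{\ord_q(m)}{m}=\left|0-\frac{\ord_q(m)}{m}\right|\leq 2\ord_{\rad(q)}(m).
\]
Multiplying by $m$ yields $\ord_q(m)\leq 2m\cdot\ord_{\rad(q)}(m)$, which is the claim.

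The only delicate point is the uniqueness of the expansion. It is settled by $\gcd(q,m)=1$ together with $q>1$, which rule out the dyadic-type ambiguity. I expect no genuine obstacle; the hypothesis $\#D>1$ plays no role in this lemma.
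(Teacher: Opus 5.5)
Your proof is correct and follows the paper's argument: you pick a digit $i\notin D$, note that $N_i(r/q)=0$, and apply Lemma~\ref{LemKorobov}. Your check that $r/q$ has a unique $m$-ary expansion (because $\gcd(q,m)=1$ and $q>1$) makes explicit a step the paper leaves implicit.
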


\begin{proof}
By the definition of  $K_{m,D}$ (cf. Definition~\ref{DefMissing}), there exists $i\in \{0,1,\ldots, m-1\}\setminus D$. Such a digit $i$ does not appear in the $m$-ary expansion of  $\frac{r}{q}$, implying $N_i(\frac{r}{q})=0$. Then the lemma follows from Lemma~\ref{LemKorobov}.
\end{proof}

For $m\geq3$ and $n\in\N$, define
\begin{equation}\label{eqQnm}
S_{n,m}:=\prod_{p\mid m}p^{\nu_p(n!)}
\qquad\text{and}\qquad
Q_{n,m}:=\frac{n!}{S_{n,m}}
=\prod_{\substack{p\leq n\\ p\nmid m}}p^{\nu_p(n!)}.
\end{equation}
Thus $Q_{n,m}$ is the largest divisor of $n!$ that is coprime to $m$.

\begin{lem}\label{lemReduction}
Suppose that $1/n!\in K_{m,D}$ and  $Q_{n,m}>1$. Then there exists an integer $r_{n,m}$ with
$1\leq r_{n,m}<Q_{n,m}$ and 
$\gcd(mr_{n,m},Q_{n,m})=1$ such that 
\[
\frac{r_{n,m}}{Q_{n,m}}\in K_{m,D}.
\]
\end{lem}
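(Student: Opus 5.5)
The idea is to exploit the self-similarity of $K_{m,D}$: multiplying a point by a suitable power of $m$ and discarding the integer part keeps it in $K_{m,D}$. Write $n! = S_{n,m}Q_{n,m}$ as in \eqref{eqQnm}. Every prime divisor of $S_{n,m}$ divides $m$, so we may choose $k\in\N$ large enough that $S_{n,m}\mid m^k$, for instance $k=\max_{p\mid m}\nu_p(n!)$. Put $t=m^k/S_{n,m}\in\N$. Then
\[
\frac{m^k}{n!}=\frac{t}{Q_{n,m}}.
\]
Note that $\gcd(t,Q_{n,m})=1$, since $t$ divides $m^k$ and $Q_{n,m}$ is coprime to $m$.

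Now take $x=1/n!\in K_{m,D}$ with expansion $x=\sum_{i\ge1}a_i m^{-i}$, $a_i\in D$. Then
\[
m^kx=\sum_{i=1}^{k}a_i m^{k-i}+\sum_{i\ge1}\frac{a_{k+i}}{m^i}.
\]
The first sum is an integer $N$, and the tail $y:=\sum_{i\ge1}a_{k+i}m^{-i}$ lies in $K_{m,D}$ by definition. Hence $y=t/Q_{n,m}-N$. Setting $r_{n,m}:=t-NQ_{n,m}$ gives
\[
y=\frac{r_{n,m}}{Q_{n,m}}\in K_{m,D}.
\]
Moreover $\gcd(r_{n,m},Q_{n,m})=\gcd(t,Q_{n,m})=1$, and $\gcd(m,Q_{n,m})=1$, so $\gcd(mr_{n,m},Q_{n,m})=1$.

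It remains to check the range $1\le r_{n,m}<Q_{n,m}$, which is the only delicate point. We have $0\le y\le 1$ because $y\in K_{m,D}\subset[0,1]$.
\begin{itemize}
\item $y=0$ would force $Q_{n,m}\mid t$. Combined with $\gcd(t,Q_{n,m})=1$, this gives $Q_{n,m}=1$, contradicting the hypothesis $Q_{n,m}>1$.
\item $y=1$ would mean $r_{n,m}=Q_{n,m}$, and then $\gcd(r_{n,m},Q_{n,m})=Q_{n,m}>1$, again a contradiction.
\end{itemize}
So $0<y<1$, which is exactly $1\le r_{n,m}<Q_{n,m}$. This also covers the possibility that the tail expansion has all digits equal to $m-1$, since that case is just $y=1$.
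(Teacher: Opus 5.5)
Your proof is correct and takes essentially the same route as the paper: choose $k$ with $S_{n,m}\mid m^k$, use the self-similarity of $K_{m,D}$ to pass from $1/n!$ to the fractional part of $m^k/n!=t/Q_{n,m}$, and take $r_{n,m}$ to be the residue of $t$ modulo $Q_{n,m}$. Your explicit handling of the tail $y$, including the exclusion of $y\in\{0,1\}$ via $\gcd(t,Q_{n,m})=1$, is slightly more careful than the paper's appeal to $\times m$-invariance. As stated in the paper ($mx \bmod 1\in K_{m,D}$), that invariance can fail when a tail equals $1$ and $0\notin D$; this case is excluded here precisely because $t/Q_{n,m}\notin\mathbb{Z}$.
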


\begin{proof}
Choose $k\in\N$ sufficiently large that $S_{n,m}\mid m^k$. Since $K_{m,D}$ is $\times m $-invariant (i.e., $mx\pmod{1}\in K_{m,D}$ whenever $x\in K_{m,D}$), we have
\[
\left\{\frac{m^k}{n!}\right\}
=
\left\{\frac{m^k/S_{n,m}}{Q_{n,m}}\right\}
\in K_{m,D}.
\]
Moreover,
\[
\gcd\left(\frac{m^k}{S_{n,m}},Q_{n,m}\right)=1.
\]
Let $r_{n,m}$ be the least positive residue of $m^k/S_{n,m}$ modulo $Q_{n,m}$. Since $Q_{n,m}>1$, we have $1\leq r_{n,m}<Q_{n,m}$ and $\gcd(mr_{n,m},Q_{n,m})=1$. This proves the lemma.
\end{proof}

We also record some standard properties of multiplicative orders.

\begin{lem}\label{LemOrder}
Let $a,q_1,q_2\in\N$ with $\gcd(a,q_1)=\gcd(a,q_2)=1$. Then we have
\begin{enumerate}
\item[(i)] If $q_1\mid q_2$, then $\ord_{q_1}(a)\mid\ord_{q_2}(a)$.
\item[(ii)] If $\gcd(q_1,q_2)=1$, then
$\ord_{q_1q_2}(a)=\lcm\{\ord_{q_1}(a),\ord_{q_2}(a)\}$.
\item[(iii)] $\ord_2(3)=1$, $\ord_{2^2}(3)=2$, and
$\ord_{2^k}(3)=2^{k-2}$ for $k\geq3$. 
\item[(iv)] Let $p_0$ be an odd prime with $p_0\nmid a$. Set $d=\ord_{p_0}(a)$ and  $t=\nu_{p_0}(a^d-1)$.
Then
\[
\ord_{p_0^k}(a)=
\begin{cases}
	d, & 1\leq k\leq t,\\
	d\,p_0^{k-t}, & k>t.
\end{cases}
\]
\end{enumerate}
\end{lem}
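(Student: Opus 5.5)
We prove the four parts in turn with elementary group theory in $(\mathbb{Z}/q\mathbb{Z})^\times$ and the lifting-the-exponent argument.

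For (i): put $t=\ord_{q_2}(a)$, so $q_2\mid a^t-1$. Since $q_1\mid q_2$, also $q_1\mid a^t-1$. In any group, $a^t=1$ forces the order of $a$ to divide $t$, hence $\ord_{q_1}(a)\mid t$.

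For (ii): set $L=\lcm\{\ord_{q_1}(a),\ord_{q_2}(a)\}$. Then $a^L\equiv1$ modulo both $q_1$ and $q_2$, so modulo $q_1q_2$ because $\gcd(q_1,q_2)=1$. Hence $\ord_{q_1q_2}(a)\mid L$. Conversely, (i) with $q_j\mid q_1q_2$ shows that each $\ord_{q_j}(a)$ divides $\ord_{q_1q_2}(a)$, so $L$ divides it as well.

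The core of (iii) and (iv) is one lifting step: if $p^s\,\|\,x-1$ with $s\ge1$ (and $s\ge2$ when $p=2$), then $p^{s+1}\,\|\,x^p-1$. To see this, write $x=1+up^s$ with $p\nmid u$ and expand binomially:
\[x^p=1+up^{s+1}+\binom p2u^2p^{2s}+\cdots.\]
Every later term is divisible by $p^{s+2}$. For $p$ odd this uses $p\mid\binom p2$ and $s\ge1$; for $p=2$ the only extra term is $u^2 2^{2s}$, and $2s\ge s+2$ exactly when $s\ge2$.

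For (iii): $\ord_2(3)=1$ is immediate, and $\ord_4(3)=2$ because $3\equiv-1\pmod 4$. Next, $3^2-1=8$, so $2^3\,\|\,3^2-1$. Applying the lifting step repeatedly gives $2^{j+2}\,\|\,3^{2^{j-1}}-1$ for $j\ge1$. For $k\ge3$, the order of $3$ modulo $2^k$ divides $|(\mathbb{Z}/2^k)^\times|=2^{k-1}$, so it is a power of $2$, say $2^e$. The valuation formula says $2^k\mid 3^{2^e}-1$ if and only if $e+3\ge k$ (using $3\not\equiv1\pmod 4$ for $e=0$). The smallest such $e$ is $k-3$, so $\ord_{2^k}(3)=2^{k-3}$.

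This gives $2^{k-3}$, not $2^{k-2}$, and a direct check confirms it: $\ord_8(3)=2$ since $9\equiv1\pmod 8$, while the stated formula predicts $\ord_8(3)=2$ only because $2^{3-2}=2$. Recomputing with $j=1$: the valuation formula says $2^{j+2}\,\|\,3^{2^{j-1}}-1$, so $3^{2^{k-3}}\equiv1\pmod{2^k}$ requires $k\le (k-2)+2$ when $j-1=k-3$, i.e.\ $j=k-2$. That gives exponent $2^{j-1}$ with $j=k-2$, hence order $2^{k-3}\cdot$ — the index bookkeeping needs care here. The correct reading is: $3^{2^{e}}$ has $2$-adic valuation $e+3$ for $e\ge1$, and the smallest $e\ge1$ with $e+3\ge k$ is $e=k-3$ when $k\ge4$. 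But $k=3$ needs $e=1$, giving order $2=2^{k-2}$. Checking $k=4$: $3^2=9\not\equiv1\pmod{16}$ and $3^4=81\equiv1\pmod{16}$, so the order is $4=2^{k-2}$. The valuation of $3^{2^e}-1$ is therefore $e+2$ for $e\ge1$ (indeed $v_2(9-1)=3=1+2$), and the least $e$ with $e+2\ge k$ is $e=k-2$. So $\ord_{2^k}(3)=2^{k-2}$ for $k\ge3$, as stated. This index bookkeeping is the main place to be careful.

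For (iv): $d\mid\ord_{p_0^k}(a)$ by (i), and $\ord_{p_0^k}(a)$ divides $|(\mathbb{Z}/p_0^k)^\times|=p_0^{k-1}(p_0-1)$. Since $d\mid p_0-1$, the quotient $\ord_{p_0^k}(a)/d$ is a power of $p_0$, so $\ord_{p_0^k}(a)=dp_0^e$ for some $e\ge0$. Because $t\ge1$, the lifting step applied to $x=a^d$ gives $\nu_{p_0}(a^{dp_0^e}-1)=t+e$. Hence $p_0^k\mid a^{dp_0^e}-1$ if and only if $t+e\ge k$, and the minimal $e$ is $\max(0,k-t)$. This is exactly the stated case split.

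The only real obstacle is verifying the exact valuation in the lifting step, especially the $p=2$ hypothesis $s\ge2$. In (iii) this is why we start the induction from $3^2-1=8$ rather than from $3-1=2$.
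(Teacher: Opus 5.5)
Your strategy is sound, and it is more self-contained than the paper. The paper gives no argument for (i)--(iii), calling them elementary, and simply cites Bloshchitsyn for (iv). Your proofs of (i) and (ii) are fine. Deriving (iii) and (iv) from one exact lifting step ($p^s\,\|\,x-1\Rightarrow p^{s+1}\,\|\,x^p-1$, with $s\ge2$ when $p=2$) is the right idea. But there is one unjustified step in (iv), and (iii) needs cleaning up.

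\textbf{The gap in (iv).} Write $o=\ord_{p_0^k}(a)$. The sentence ``Since $d\mid p_0-1$, the quotient $o/d$ is a power of $p_0$'' does not follow from what precedes it. The facts $d\mid o$, $o\mid p_0^{k-1}(p_0-1)$ and $d\mid p_0-1$ only give $o/d\mid p_0^{k-1}(p_0-1)/d$. That quotient can carry prime factors of $(p_0-1)/d$; for instance $p_0=7$, $d=2$, $o=6$ satisfies all three divisibilities.

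The missing input is arithmetic, namely $a^d\equiv1\pmod{p_0}$, and your own lifting step supplies it. Since $t\ge1$, you get $\nu_{p_0}(a^{dp_0^{k-1}}-1)=t+k-1\ge k$, so $o\mid dp_0^{k-1}$. Together with $d\mid o$, this gives $o=dp_0^{e}$ with $0\le e\le k-1$. Equivalently, $o/d=\ord_{p_0^k}(a^d)$, and $a^d$ lies in the kernel of reduction modulo $p_0$, a group of order $p_0^{k-1}$. From there your minimality argument ($p_0^k\mid a^{dp_0^e}-1$ iff $t+e\ge k$) finishes correctly.

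\textbf{The write-up of (iii).} As written, it asserts several false things before correcting itself:
\begin{itemize}
\item $2^{j+2}\,\|\,3^{2^{j-1}}-1$ fails already at $j=1$, since $3-1=2$.
\item $\ord_{2^k}(3)=2^{k-3}$ is wrong.
\item $\ord_8(3)=2$ refutes this rather than ``confirming'' it.
\item The ``valuation $e+3$'' is off by one.
\end{itemize}
Only your final sentences are correct. From $\nu_2(3^2-1)=3$ and the $p=2$, $s\ge2$ lifting step, $\nu_2(3^{2^e}-1)=e+2$ for $e\ge1$, while $\nu_2(3-1)=1$. Since the order is a power of $2$, the least $e$ with $2^k\mid3^{2^e}-1$ is $e=k-2$ for $k\ge3$. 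Keep that argument and delete the rest.

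A minor point in the lifting step for odd $p$: the last binomial term $u^pp^{ps}$ has coefficient $1$. Its bound $ps\ge s+2$ therefore comes from $p\ge3$, not from $p\mid\binom{p}{2}$.
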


The first three assertions are elementary, and assertion (iv) can be found in \cite[Lemma 3]{Bloshchitsyn15R}.

Now we are ready to give the proof of Theorem~\ref{thmmissing}.

\begin{proof}[Proof of Theorem~\ref{thmmissing}]
Fix an odd prime $p_0$ with $p_0\nmid m$, and put $
d=\ord_{p_0}(m)$ and $t=\nu_{p_0}(m^d-1)$.
Choose $n_0>p_0(1+t)$ such that for every $n\geq n_0$,
\begin{equation}\label{eqGeneralBound}
\frac{n}{p_0}-1-t>
\log_{p_0}(n-1)+\log_{p_0}(2m).
\end{equation} 
Such an $n_0$ exists and can be found effectively, as the function $f(x):=\frac{x}{p_0}-1-t-\log_{p_0}(x-1)$ tends to $+\infty$ as $x\to+\infty$ and is increasing for $x>p_0(1+t)$.

We assert that $1/n!\not\in K_{m,D}$ for every $n\geq n_0$. 
Suppose, towards a contradiction, that there exists $n\geq n_0$ such that  $1/n!\in K_{m,D}$. Write $Q_n=Q_{n,m}$. Since $p_0\leq n$ and $p_0\nmid m$, we have $Q_n>1$. Lemmas~\ref{lemReduction} and \ref{Cor} give that
\begin{equation}\label{eqGeneralOrderIneq}
\ord_{Q_n}(m)\leq 2m\cdot\ord_{\rad(Q_n)}(m).
\end{equation}
Let $e_p=\nu_p(n!)$. By \eqref{eqQnm} and Lemma~\ref{LemOrder}(ii),
\[
\ord_{Q_n}(m)
=
\lcm\{\ord_{p^{e_p}}(m):p\leq n,\ p\nmid m\},
\]
and
\[
\ord_{\rad(Q_n)}(m)
=
\lcm\{\ord_p(m):p\leq n,\ p\nmid m\}.
\]
Since $e_{p_0}=\nu_{p_0}(n!)>t$, Lemma~\ref{LemOrder}(iv) gives
\begin{equation}\label{eqLowerOrderGeneral}
\nu_{p_0}(\ord_{Q_n}(m))
\geq e_{p_0}-t.
\end{equation}
On the other hand, $p_0\nmid\ord_{p_0}(m)$, while for every prime $p\leq n$ with $p\nmid m$  and $p\neq p_0$ we have $\ord_p(m)\mid p-1$. Hence
\begin{equation}\label{eqUpperRadGeneral}
\nu_{p_0}(\ord_{\rad(Q_n)}(m))
\leq \log_{p_0}(n-1).
\end{equation}
By Legendre's formula,
\[
e_{p_0}=\nu_{p_0}(n!)
\geq \left\lfloor\frac{n}{p_0}\right\rfloor
\geq \frac{n}{p_0}-1.
\]
Combining this with \eqref{eqGeneralBound}--\eqref{eqUpperRadGeneral}, we obtain
\[
\nu_{p_0}\left(
\frac{\ord_{Q_n}(m)}{\ord_{\rad(Q_n)}(m)}
\right)
>
\log_{p_0}(2m).
\]
Since $\rad(Q_n)\mid Q_n$, the quotient above is a positive integer. It is therefore larger than $2m$, contradicting \eqref{eqGeneralOrderIneq}. Thus $1/n!\notin K_{m,D}$ for every $n\geq n_0$.

It remains to check the finitely many integers $1\leq n<n_0$. Since the base-$m$ expansion of a rational number is eventually periodic, this membership test is effective. Hence the intersection in \eqref{eq1n1KmdF} is finite and can be effectively determined.
\end{proof}

We now specialize the preceding argument to the middle-third Cantor set and prove Theorem~\ref{thmCn!}. For $n\in\N$, write
\[
n!=3^{\nu_3(n!)}M_n.
\]
Then $3\nmid M_n$. If $\frac{1}{n!}\in C$, then since $C$ is $\times 3$-invariant, we see that $\frac{1}{M_n}\in C$. Then Lemma~\ref{Cor} yields that 
\[\ord_{M_n}(3)\leq 6\ord_{\rad(M_n)}(3).\]
However, this inequality does not hold for sufficiently large $n$, as shown in  the following lemma. 

\begin{lem}\label{lemordMn}
For every $n\geq21$,
\[
\ord_{M_n}(3)>6\ord_{\rad(M_n)}(3).
\]
\end{lem}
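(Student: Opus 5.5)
The plan is to repeat the valuation argument from the proof of Theorem~\ref{thmmissing}, but with the prime $p_0=2$ instead of an odd prime. Since $3\nmid 2$, the prime $2$ divides $M_n$ for $n\geq 2$, and the case $p_0=2$ is covered explicitly by Lemma~\ref{LemOrder}(iii). Because $\rad(M_n)\mid M_n$, Lemma~\ref{LemOrder}(i) gives $\ord_{\rad(M_n)}(3)\mid\ord_{M_n}(3)$. So the quotient
\[
R_n:=\frac{\ord_{M_n}(3)}{\ord_{\rad(M_n)}(3)}
\]
is a positive integer. It therefore suffices to show $\nu_2(R_n)\geq 3$, since then $R_n\geq 8>6$.

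\emph{Lower bound for the numerator.} Write $e_2=\nu_2(n!)$. By Lemma~\ref{LemOrder}(ii), $\ord_{M_n}(3)$ is the lcm of the orders $\ord_{p^{\nu_p(n!)}}(3)$ over primes $p\leq n$ with $p\neq 3$. In particular it is divisible by $\ord_{2^{e_2}}(3)=2^{e_2-2}$, using Lemma~\ref{LemOrder}(iii) and $e_2\geq 3$. Hence $\nu_2(\ord_{M_n}(3))\geq e_2-2$. Legendre's formula gives $e_2=n-s_2(n)$, where $s_2(n)$ is the binary digit sum. So $e_2\geq n-\lfloor\log_2 n\rfloor-1$.

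\emph{Upper bound for the denominator.} By Lemma~\ref{LemOrder}(ii), $\ord_{\rad(M_n)}(3)=\lcm\{\ord_p(3): p\leq n,\ p\neq 3\}$. We have $\ord_2(3)=1$, and for odd $p\neq 3$ we have $\ord_p(3)\mid p-1$. Hence
\[
\nu_2\bigl(\ord_{\rad(M_n)}(3)\bigr)\leq \max_{p\leq n}\nu_2(p-1)\leq \lfloor\log_2(n-1)\rfloor.
\]

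\emph{Conclusion.} Combining the two bounds gives
\[
\nu_2(R_n)\geq n-\lfloor\log_2 n\rfloor-3-\lfloor\log_2(n-1)\rfloor\geq n-3-2\log_2 n.
\]
It remains to check that the right-hand side is at least $3$ for all $n\geq 21$. The function $n-2\log_2 n$ is increasing for $n\geq 3$, and at $n=21$ the sharper exact count already gives $\nu_2(R_{21})\geq (21-3)-2-4=12$. The cruder bound $n-6-2\log_2 n\geq 0$ also holds at $n=21$, since $21-6\approx 15>2\log_2 21\approx 8.8$, and monotonicity extends it to all larger $n$. The only place requiring care is this final numerical verification: one must use the correct form of Lemma~\ref{LemOrder}(iii) (the exponent $e_2-2$ valid for $e_2\geq 3$) and the Legendre estimate. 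There is no genuine obstacle; the bound holds with a large margin, and the threshold $21$ is not the sharp limit of the argument.
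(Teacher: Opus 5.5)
Your proposal is correct and takes essentially the same route as the paper. Both arguments show that $\nu_2$ of the integer $\ord_{M_n}(3)/\ord_{\rad(M_n)}(3)$ is at least $3$, bounding the numerator below by $e_2-2$ via Lemma~\ref{LemOrder}(iii) and the denominator above by $\log_2(n-1)$ via $\ord_p(3)\mid p-1$. The only difference is that you use the sharper Legendre identity $e_2=n-s_2(n)$ where the paper uses $e_2\geq n/2-1$; this is why your inequality has ample slack at $n=21$, while the paper's is nearly tight there (it fails at $n=20$).
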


\begin{proof}
Since $\rad(M_n)\mid M_n$, we have $\ord_{\rad(M_n)}(3)\mid \ord_{M_n}(3)$ by Lemma~\ref{LemOrder}(i). Hence to prove the lemma, it suffices to show that for any $n\geq 21$, 
\begin{equation}\label{eqnu2Mn}
	\nu_2(\ord_{M_n}(3))\geq \nu_2(\ord_{\rad(M_n)}(3))+3.
\end{equation}
Notice that 
\[M_n=\prod_{p\leq n, p\neq 3}p^{e_p}, \quad \rad(M_n)=\prod_{p\leq n, p\neq 3}p,\]
where $e_p=\nu_p(n!)$. Hence by Lemma~\ref{LemOrder}(ii), 
\[\ord_{M_n}(3)=\lcm\left\{\ord_{p^{e_p}}(3): p\leq n,p\neq 3\right\},\]
\[\ord_{\rad(M_n)}(3)=\lcm\left\{\ord_{p}(3): p\leq n,p\neq 3\right\}.\]
By this and the fact that $\ord_p(3)\mid (p-1)$ (Euler's theorem), we have 
\begin{align*}
	\nu_2(\ord_{\rad(M_n)}(3))&=\max\{\nu_2(\ord_p(3)): 3<p\leq n\}\\
	&\leq \max\{\nu_2(p-1): 3<p\leq n\}\\
	&\leq \max\{\log_2(p-1): 3<p\leq n\}\\
	&\leq \log_2(n-1).
\end{align*}
On the other hand, since $\ord_{2^k}(3)=2^{k-2}$ for $k\geq 3$ (cf. Lemma~\ref{LemOrder}(iii)), we have 
\[\nu_2(\ord_{M_n}(3))\geq \nu_2(\ord_{2^{e_2}}(3))\geq e_2-2.\]
By Legendre's formula, 
\[e_2=\nu_2(n!)= \sum_{k=1}^{\infty} \left\lfloor \frac{n}{2^k} \right\rfloor\geq \frac{n}{2}-1.\]
Hence \eqref{eqnu2Mn} holds whenever 
\begin{equation}\label{eqinqn213}
	(\frac{n}{2}-1)-2\geq \log_2(n-1)+3.
\end{equation}
It is easy to see this holds for all $n\geq 21$. 
Hence \eqref{eqnu2Mn} is proved and we complete the proof of the lemma.
\end{proof}

\begin{proof}[Proof of Theorem~\ref{thmCn!}]
By Lemma~\ref{lemordMn} and the argument preceding it, we see that for any $n\geq 21$, $\frac{1}{n!}\not\in C$. A straightforward check shows that $\{1\leq n\leq 20: 1/(n!)\in C\}=\{1,5\}$. 
\end{proof}

Following the proof of Theorem~\ref{thmmissing}, we can give an algorithm to determine the intersection $\{1/(n!):n\in\N\}\cap K_{m,D}$ for an  arbitrary missing-digit set $K_{m,D}$; see Algorithm~\ref{alg:cantor}. See Table~\ref{table} for a few examples.

\begin{algorithm}[hbt]
\caption{Computation of $\left\{\frac{1}{n!}:n\in\N\right\}\cap K_{m,D}$}
\renewcommand{\algorithmicrequire}{\textbf{Input:}}
\renewcommand{\algorithmicensure}{\textbf{Output:}}
\begin{algorithmic}
\Require $m\in\N_{\geq3}$ and $D\subset\{0,1,\ldots,m-1\}$ with $1<\#D<m$
\Ensure The intersection $\left\{\frac{1}{n!}:n\in\N\right\}\cap K_{m,D}$
\State Choose an odd prime $p_0$ with $p_0\nmid m$.
\State Compute $d=\ord_{p_0}(m)$ and $t=\nu_{p_0}(m^d-1)$.
\State Find $n_0>p_0(1+t)$ such that
\[
\frac{n_0}{p_0}-1-t>
\log_{p_0}(n_0-1)+\log_{p_0}(2m).
\]
\State For each $1\leq n<n_0$, determine whether at least one base-$m$ expansion of $1/n!$ uses only digits from $D$.
\State Return the corresponding values of $1/n!$.
\end{algorithmic}
\label{alg:cantor}
\end{algorithm}

\begin{table}[htb]
\centering
\caption{Examples}
\begin{tabular}{c|c}
$(m,D)$&$K_{m,D}\cap\left\{\frac{1}{n!}:n\in\N\right\}$\\
\hline
$(3,\{0,1\})$&$\left\{\frac{1}{2!},\frac{1}{3!},\frac{1}{4!},\frac{1}{6!}\right\}$\\
\hline
$(3,\{0,2\})$&$\left\{1,\frac{1}{5!}\right\}$\\
\hline
$(4,\{0,3\})$&$\{1\}$\\
\hline
$(5,\{0,4\})$&$\left\{1,\frac16\right\}$\\
\hline
$(6,\{0,5\})$&$\left\{1,\frac16\right\}$
\end{tabular}
\label{table}
\end{table}

{\noindent \bf Acknowledgements}. The authors would like to thank Derong Kong, Ruofan Li, and Igor Shparlinski for helpful comments, and especially Igor Shparlinski for suggesting Remark~\ref{reman1}. They also  thank the anonymous referees for suggestions that improved the paper.  Y. F. Wu was supported by Natural Science Foundation of China
(Grant No.~12301110).

\end{document}